\title[Open Problem: Anytime GD]{Open Problem: Anytime Convergence Rate of Gradient Descent}
\newtheorem{openproblem}{Open Problem}
\crefname{equation}{Eq.}{Equations}
\crefname{eq}{Eq.}{Equations}
\crefname{fact}{Fact}{Facts}
\crefname{lemmma}{Lemma}{Lemmas}
\crefname{lemma}{Lemma}{Lemma}
\crefname{lem}{Lemma}{Lemma}
\crefname{openproblem}{Open Problem}{Open Problem}
\crefname{figure}{Figure}{Figures}
\crefname{defn}{Definition}{Definitions}
\crefname{ineq}{Inequality}{Inequalities}
\crefname{prob}{Problem}{Problems}
\crefname{corollary}{Corollary}{Corollaries}
\Crefname{listfact}{Fact}{Facts}
\crefname{assum}{Assumption}{Assumptions} 
\newcommand{\reals}{\mathbb{R}}
\newcommand{\abs}[1]{\left| #1 \right|}
\newcommand{\beq}{\begin{eqnarray*}}
\newcommand{\eeq}{\end{eqnarray*}}
\newcommand{\beqn}{\begin{eqnarray}}
\newcommand{\eeqn}{\end{eqnarray}}
\newcommand{\NN}{\mathbb{N}}
\newcommand{\half}{\frac{1}{2}}
\newcommand{\norm}[1]{\left\|#1\right\|}
\begin{document}

\maketitle

\begin{abstract}%
  Recent results show that vanilla gradient descent can be accelerated for smooth convex objectives, merely by changing the stepsize sequence. We show that this can lead to surprisingly large errors indefinitely, and therefore ask: Is there any stepsize schedule for gradient descent that accelerates the classic $\mathcal{O}(1/T)$ convergence rate, at \emph{any} stopping time $T$?
\end{abstract}


\section{Introduction}

Consider the classic setting of optimizing a smooth convex objective via gradient descent (GD):
Given a convex function $f:\reals^d\to\reals$ which is $L$-smooth (i.e. $\nabla f$ exists and is $L$-Lipschitz),
and an initial point $x_0\in\reals^d$, the GD iterates with stepsizes $(\eta_t)_{t=0}^{\infty}$ are defined as $x_{t+1}=x_{t}-\eta_t \nabla f(x_t)$.

The textbook analysis of GD under this setting (e.g. \citealp{nesterov2018lectures,bubeck2015convex}) asserts that when the stepsize schedule is fixed to be constant $\eta_t\equiv \overline{\eta}\in(0,\frac{2}{L})$, the iterates satisfy the bound
\begin{equation} \label{eq: classic GD}
f(x_T) -f^*\lesssim \frac{L\norm{x_0-x^*}^2}{T}~~~\text{for all }T\in\NN
~,
\end{equation}
where $f^*=\inf f$, $x^*$ is any minimizer of $f$,
and ``$\lesssim$'' hides a  constant. It is also well-known that for constant steps larger than $2/L$, the algorithm can diverge. 
The behavior of GD in this setting is extremely well-studied, and one would think that it is fully understood.

However, quite unexpectedly, a recent line of work established that GD can achieve faster convergence rates than implied by \cref{eq: classic GD}, without any modification to the algorithm itself, merely by using appropriate \emph{non-constant} stepsize schedules which incorporate occasional \emph{long steps}, 
larger than $2/L$ \citep{grimmer2023accelerated,altschuler2023acceleration}.\footnote{\citep{daccache2019performance,eloi2022worst,das2024branch,grimmer2023provably} previously improved the \emph{constant} factor in \eqref{eq: classic GD}.}
In particular, \citet{altschuler2023silverII} constructed a stepsize sequence, coined the ``silver stepsize'' schedule, that guarantees
\begin{equation}\label{eq: silver guarantee}
    f(x_T)- f^*
    \lesssim \frac{L\norm{x_0-x^*}^2}{T^{\log_2(1+\sqrt{2})}}
    \approx \frac{L\norm{x_0-x^*}^2}{T^{1.2716}}~~~\text{for all }T=2^n-1,n\in\NN
~.
\end{equation}
Remarkably, compared to \cref{eq: classic GD}, this bound achieves an accelerated $o(1/T)$ rate, merely by changing the GD stepsize schedule. However, note that the bound no longer applies for all $T$. Instead, it only applies for certain exponentially-increasing horizons $T=2^n-1$, with no guarantee on the performance of intermediate iterates. From a practical viewpoint, this is not quite satisfactory, as often the number of iterations is not carefully chosen in advance. Although this can be circumvented by doubling tricks or tracking the best iterate obtained so far (as further discussed below), in practice it is desirable to have a uniform, monotonically-decreasing guarantee on the error, which ensures that at any large enough stopping point, the resulting optimization error is small. As far as we know, none of the existing results for stepsize-based acceleration apply in an anytime fashion, and it is not  clear that such acceleration is even possible. Hence, we formulate the following open problem:
\begin{openproblem} \label{open problem}
    What is the best \textbf{anytime} convergence rate achievable by GD with some stepsize sequence $(\eta_t)_{t=0}^{\infty}$, uniformly over $L$-smooth convex functions? In particular, is there any stepsize sequence and some $\alpha>1$,
    such that for all $L$-smooth convex $f$, 
\begin{equation} \label{eq: acc bound}
f(x_T)-f^*\lesssim \frac{L\norm{x_0-x^*}^2}{T^{\alpha}}~~~\text{for all }T\in\NN
~?
\end{equation}
\end{openproblem}

We remark that we seek an anytime, monotonically decreasing \emph{upper bound} on the error: Indeed, with long steps, the errors themselves may not decrease monotonically. A similar phenomenon is exhibited by Nesterov's  accelerated gradient method \citep{nesterov1983method}: It is well-known (e.g., \citealp{d2021acceleration}) that this algorithm does \emph{not} monotonically decrease the error, while still having an anytime, monotonically decreasing error bound similar to \cref{eq: classic GD} (replacing $T^{-1}$ by $T^{-2}$, which is the optimal dimension-free rate for gradient-based algorithms). 

We further note that while we focus on the convex setting, the analogous question for the \emph{strongly}-convex case is also of interest, for which it is unclear whether any stepsize sequence achieves an anytime $\exp(-T/o(\kappa)))$ rate, uniformly over functions with condition number $\kappa$.

\paragraph{Equivalent view: Bounds on the iterate vs. best iterate}
A bound such as \eqref{eq: silver guarantee} can be easily converted to an anytime bound on $\min_{t\in[T]}f(x_t)-f^*$ (namely, the best iterate obtained so far): Indeed, for any given $T\in\NN$, let $\hat{T}$ be the largest integer such that $\hat{T}\leq T$ and $\hat{T}=2^n-1$ for some $n\in\NN$. It is easy to show that $\hat{T}\geq T/2$, and hence by \eqref{eq: silver guarantee},
$
\min_{t\in[T]}f(x_t)-f^*~\leq~
f(x_{\hat{T}})-f^*\lesssim\frac{L\norm{x_0-x^*}^2}{\hat{T}^{1.2716}}\leq
\frac{2^{1.2716}L\norm{x_0-x^*}^2}{T^{1.2716}}~~~\text{for all }T\in\NN~.
$
This is an anytime guarantee, which matches \eqref{eq: silver guarantee} up to a small numerical constant (indeed, \cite{grimmer2023accelerated} even present their accelerated result in this manner). However, this anytime guarantee no longer applies to individual iterates $x_T$. Thus, our question is equivalent to asking whether an appropriate stepsize schedule can accelerate GD in terms of $f(x_T)-f^*$, rather than $\underset{{t\in[T]}}{{\min}}\,f(x_t)-f^*$.

\section{Preliminary results}

We take steps towards the resolution of \cref{open problem} by providing two results, both of which hold already in dimension $d=1$. These results indicate the tension between acceleration with GD and anytime guarantees.
Moreover, we establish that current accelerating stepsize schedules can strongly fail to meet anytime guarantees.

First, we note that long steps are not only required for anytime acceleration, in fact such acceleration necessitates \emph{arbitrarily large} steps.\footnote{Note that this assertion relies on the finite-time bound being uniform over all smooth convex functions. In fact, an asymptotic $o(L/T)$ for any \emph{fixed} function actually holds for constant stepsizes \citep{lee2019first}.} This can be formally stated as follows:

\begin{theorem} \label{thm: infty}
    Suppose that GD with stepsizes $(\eta_t)_{t=0}^{\infty}$ satisfies an accelerated uniform guarantee, namely $\forall T\in\NN:f(x_T)-f^*\lesssim o(L/T)$ for any $L$-smooth convex $f$. Then ${\lim\sup}_{t\to\infty}\eta_t=\infty$.
\end{theorem}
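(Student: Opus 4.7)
The plan is to argue by contradiction: assume $\limsup_t \eta_t = M < \infty$, so there exists $N\in\NN$ with $\eta_t \leq M$ for all $t\geq N$; write $K:=\sum_{t=0}^{N-1}\eta_t$ and $S_T:=\sum_{t=0}^{T-1}\eta_t \leq K+MT$. For each sufficiently large $T$ I will construct an $L$-smooth convex function $f_T$ (allowed to depend on $T$, which is consistent with the footnote: a \emph{fixed} $f$ already enjoys asymptotic $o(L/T)$ under constant steps) on which GD incurs error $\Omega(L/T)$. Since this non-vanishing lower bound holds for all sufficiently large $T$, it contradicts uniform $o(L/T)$ convergence.

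Taking $L=1$, my candidate is the one-dimensional Huber function
\[
f_T(x) \;=\; \begin{cases} \tfrac12 x^2 & |x|\leq \delta_T,\\ \delta_T|x|-\tfrac12\delta_T^2 & |x| > \delta_T, \end{cases} \qquad \delta_T := \tfrac{1}{2(1+S_T)},
\]
which is $1$-smooth convex with $x^*=f_T^*=0$; initialized at $x_0=1$ it satisfies $\norm{x_0-x^*}^2=1$. The crucial feature is that on the positive linear branch $\{x>\delta_T\}$ the gradient equals the \emph{constant} $\delta_T$, independent of $x_t$, so each GD step deterministically reduces the iterate by exactly $\eta_t\delta_T$, regardless of how large $\eta_t$ is. An induction then shows $x_t \geq 1/2 > \delta_T$ throughout: indeed $x_t = 1 - \delta_T\sum_{s<t}\eta_s \geq 1-\delta_T S_T = 1-\tfrac{S_T}{2(1+S_T)} \geq \tfrac12$. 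Hence the iterate never leaves the linear region, and $f_T(x_T) = \delta_T x_T - \tfrac12\delta_T^2 \geq \tfrac{\delta_T}{4} = \tfrac{1}{8(1+S_T)}$. Multiplying by $T$ and using $S_T \leq K+MT$, $T\cdot f_T(x_T) \geq \tfrac{T}{8(1+K+MT)} \to \tfrac{1}{8M}$, so the worst-case ratio $T(f(x_T)-f^*)/(L\norm{x_0-x^*}^2)$ stays above a positive constant, which is the desired contradiction.

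The main subtlety I anticipate is handling possibly enormous \emph{early} stepsizes, since the $\limsup$ hypothesis only controls the tail. The remedy built into the plan is to tie $\delta_T$ inversely to the \emph{full} step mass $S_T$: this absorbs any initial overshooting and guarantees GD cannot escape the linear region — where it would otherwise enter the quadratic branch and converge geometrically, defeating the lower bound. Beyond this, everything reduces to an explicit one-dimensional calculation.
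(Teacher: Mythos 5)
Your proof is correct, but it takes a genuinely different route from the paper's. The paper optimizes the quadratic $f_T(x) = x^2/(2\Sigma_T)$ with $\Sigma_T := \sum_{t<T}\eta_t$, computes $x_T = \prod_{t<T}(1 - \eta_t/\Sigma_T)$ explicitly, lower-bounds this product via $\log(1-u) \geq -2u$ (which requires the WLOG side condition $\eta_t \leq \tfrac12\Sigma_T$ for $t<T$, handled by a footnote), and then directly concludes $\max_{t<T}\eta_t \geq \Sigma_T/T \geq e^{-4}/(2T\phi(T)) \to \infty$. You instead run a contradiction argument with a Huber loss whose corner is placed at $\delta_T = 1/(2(1+S_T))$: since the gradient is the constant $\delta_T$ on the linear branch, the GD trajectory is affine in $\sum_{s<t}\eta_s$, and pinning $\delta_T$ to the total step mass $S_T$ ensures the iterates never leave that branch through time $T$, yielding $f_T(x_T) \geq \delta_T/4 \geq 1/(8(1+K+MT))$, which times $T$ is bounded away from zero. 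What your approach buys: the constant-gradient region sidesteps the product estimate, the exponential inequality, and the auxiliary WLOG assumptions; it also automatically absorbs the case $\sum_t \eta_t < \infty$ (then $\delta_T$ is bounded below and the error does not even vanish), whereas the paper handles it as a separate remark. What the paper's approach buys: it extracts a quantitative lower bound on $\max_{t<T}\eta_t$ directly in terms of the assumed rate $\phi(T)$, rather than going through an abstract contradiction, and reuses the same clean one-parameter quadratic family throughout. Both proofs are one-dimensional and both are valid; yours is arguably tidier, and notably it uses essentially the same Huber construction the paper reserves for Theorem~2.

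One small point to tighten: $\limsup_t \eta_t = M$ only gives, for any $\varepsilon>0$, a tail bound $\eta_t \leq M+\varepsilon$; so formally set $M' := M+1$, take $N$ with $\eta_t \leq M'$ for $t\geq N$, and carry $M'$ through in place of $M$. This does not affect the conclusion. Also, stating explicitly that stepsizes are assumed nonnegative (as is standard) would make the monotonicity of $\sum_{s<t}\eta_s$ and the induction that the iterate stays on the linear branch fully airtight.
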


Indeed, the step size schedules in \cite{grimmer2023accelerated} and \cite{altschuler2023silverII} satisfy this requirement: They both involve a fractal-like stepsize schedule, where the stepsize value increases exponentially at exponentially-increasing intervals. However, our next result implies that occasional huge steps (compared to previous steps) can prevent any decaying uniform bound, even one which is not accelerated. As a corollary (see Corollary~\ref{cor:silver}), this implies that the silver stepsize schedule cannot enjoy \emph{any} convergence guarantee which holds in an anytime fashion.

\begin{theorem} \label{thm: main}
    For any $L>0$, stepsize schedule $(\eta_t)_{t=0}^{\infty}$ and $T\in\NN$ satisfying $\min\{\frac{\eta_T}{2},\sum_{t=0}^{T-1}\eta_{t}\}\geq \frac{1}{L}$, there exists an $L$-smooth convex  $f:\reals\to\reals$ such that $f(x_{T+1})-f^*\geq \frac{1}{32}L\norm{x_0-x^*}^2\left(\frac{\eta_T}{\sum_{t=0}^{T-1}\eta_t}\right)^2$.
\end{theorem}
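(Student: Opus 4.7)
My plan is to construct an explicit one-dimensional $L$-smooth convex function $f$, tailored to the given stepsize sequence, on which GD from an appropriate initialization incurs a large error at iterate $T+1$. The idea is that the long step $\eta_T \geq 2/L$ causes a significant overshoot past the minimum, \emph{provided} the gradient at time $T$ has a definite nonzero value; and if the overshoot lands in a curved region of $f$, then $f(x_{T+1})$ will grow quadratically in $\eta_T$. Concretely, for an arbitrary scale $g>0$, I would take the ``half-Huber'' function
\[
f(x) \eqdef \begin{cases} \frac{L}{2}x^2, & x \leq g/L, \\ gx - \frac{g^2}{2L}, & x \geq g/L, \end{cases}
\]
which is convex and $L$-smooth (its second derivative is $L$ on the quadratic side and $0$ on the affine side, and the one-sided derivatives match at $x=g/L$), with $f^* = 0$ attained at $x^* = 0$.

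Initializing at $x_0 \eqdef g(S + 1/L)$, where $S \eqdef \sum_{t=0}^{T-1}\eta_t$, and using that $f'(x) = g$ on the half-line $x \geq g/L$, I would establish by induction on $s = 0, 1, \ldots, T$ that $x_s = g\bigl(\sum_{t=s}^{T-1}\eta_t + 1/L\bigr) \geq g/L$. Thus the iterates never leave the affine right region up through time $T$, and $x_T = g/L$ sits exactly on the boundary, where the gradient is still unambiguously $g$. The long step then sends $x_{T+1} = g(1/L - \eta_T)$, and the assumption $\eta_T \geq 2/L$ guarantees $x_{T+1} \leq -g/L$, landing squarely in the quadratic region on the far side of the minimum.

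The remaining calculation is a one-line estimate on each side of the desired inequality. One has $f(x_{T+1}) = \frac{L}{2}x_{T+1}^2 = \frac{Lg^2}{2}(\eta_T - 1/L)^2 \geq \frac{Lg^2\eta_T^2}{8}$, using $\eta_T - 1/L \geq \eta_T/2$ from $\eta_T \geq 2/L$; meanwhile, the hypothesis $S \geq 1/L$ gives $L\norm{x_0 - x^*}^2 = Lg^2(S + 1/L)^2 \leq 4Lg^2S^2$. Combining these yields the claimed $f(x_{T+1}) - f^* \geq \frac{1}{32}L\norm{x_0 - x^*}^2(\eta_T/S)^2$. The main subtlety I anticipate is verifying that the intermediate iterates do not escape the affine region under a pathological prefix schedule (for example, a single very large $\eta_s$ with $s < T$); this is handled by noting $\eta_s \leq \sum_{t=s}^{T-1}\eta_t$, so step $s$ decreases the iterate by at most $x_s - g/L$, keeping $x_{s+1} \geq g/L$.
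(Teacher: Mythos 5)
Your proof is correct and uses the same core construction as the paper: a one-dimensional Huber-type loss whose quadratic-to-affine breaking point is placed so that $x_T$ lands exactly on the boundary, causing the long step $\eta_T$ to overshoot into the curved region, with the error then estimated quadratically. Your parameterization is in fact cleaner than the paper's: by fixing the curvature at the maximum allowed value $L$ (so that $a\eta_T\geq 2$ holds automatically from the hypothesis $\eta_T\geq 2/L$) and exploiting that the ratio $f(x_{T+1})/\norm{x_0-x^*}^2$ is invariant under scaling the slope $g$, you avoid the paper's auxiliary Lemma~\ref{lem:ar}, which balances two free parameters $a$ and $r$ against each other and additionally normalizes $\norm{x_0-x^*}\leq 1$.
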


Note that the lower bound holds for $T$ as long as it satisfies $\eta_T \geq \frac{2}{L}$ and $\sum_{t=0}^{T-1}\eta_{t}\geq \frac{1}{L}$. The latter condition is in a sense generic, and should be expected to hold for all large enough $T$, since otherwise GD cannot guarantee convergence to possibly far-away minima in the first place. Thus the important condition is that $\eta_T \geq \frac{2}{L}$, namely that at time $T$ GD takes a long step (beyond the $2/L$ regime). The theorem formally shows that long steps may ``overshoot'', as measured by the squared ratio $(\frac{\eta_T}{\sum_{t=0}^{T-1}\eta_t})^2$. In particular, \emph{the larger this ratio is, the larger the error can be after the long step}.

\begin{corollary} \label{cor: not_too_large}
    If a stepsize sequence $(\eta_t)_{t=0}^{\infty}$ satisfies an anytime accelerated bound as in \cref{eq: acc bound}, then 
    $\eta_T\lesssim \frac{\sum_{t=0}^{T-1}\eta_t}{T^{\alpha/2}}
    =o\left(\frac{\sum_{t=0}^{T-1}\eta_t}{\sqrt{T}}\right)
    $ for infinitely many $T\in\NN$ in which long steps occur.
\end{corollary}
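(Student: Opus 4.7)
The plan is to obtain the corollary as an almost immediate combination of the two preceding theorems. The upper bound from the assumed anytime acceleration is confronted with the lower bound from \cref{thm: main} at every ``long-step'' index $T$, and the ratio $\eta_T/\sum_{t<T}\eta_t$ is squeezed between them.

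First, I would use \cref{thm: infty}: under the hypothesized accelerated anytime rate $f(x_T)-f^*\lesssim L\norm{x_0-x^*}^2/T^\alpha$ (which is certainly $o(L/T)$ since $\alpha>1$), we have $\limsup_{t\to\infty}\eta_t=\infty$. In particular the set $\Scal:=\{T\in\NN : \eta_T\geq 2/L\}$ is infinite, so ``long steps occur'' for infinitely many indices. Moreover, since $\limsup \eta_t=\infty$, we have $\sum_{t=0}^{\infty}\eta_t=\infty$, so the partial sums $\sum_{t=0}^{T-1}\eta_t$ eventually exceed $1/L$; thus both hypotheses of \cref{thm: main} are satisfied for every sufficiently large $T\in\Scal$.

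Next, for each such $T$, \cref{thm: main} produces an $L$-smooth convex $f:\R\to\R$ (witnessing the worst case) for which
\[
f(x_{T+1})-f^*\;\geq\;\frac{1}{32}\,L\norm{x_0-x^*}^2\left(\frac{\eta_T}{\sum_{t=0}^{T-1}\eta_t}\right)^{\!2}.
\]
The assumed anytime bound applied at iteration $T+1$ gives $f(x_{T+1})-f^*\lesssim L\norm{x_0-x^*}^2/(T+1)^{\alpha}$. Chaining these two inequalities, cancelling $L\norm{x_0-x^*}^2$, and taking square roots yields
\[
\frac{\eta_T}{\sum_{t=0}^{T-1}\eta_t}\;\lesssim\;\frac{1}{(T+1)^{\alpha/2}}\;\lesssim\;\frac{1}{T^{\alpha/2}},
\]
which rearranges to the claimed $\eta_T\lesssim \sum_{t=0}^{T-1}\eta_t/T^{\alpha/2}$ for all sufficiently large $T\in\Scal$, i.e.\ for infinitely many indices at which long steps occur. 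Finally, since $\alpha>1$ gives $\alpha/2>1/2$, the bound $T^{-\alpha/2}=o(T^{-1/2})$ yields the second form $\eta_T=o(\sum_{t=0}^{T-1}\eta_t/\sqrt{T})$.

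There is no serious obstacle here; the only mild subtlety is ensuring that the two hypotheses of \cref{thm: main} (namely $\eta_T\geq 2/L$ \emph{and} $\sum_{t<T}\eta_t\geq 1/L$) simultaneously hold along an infinite subsequence, which is exactly what \cref{thm: infty} and the divergence of $\sum_t\eta_t$ guarantee. The rest is just pitting the worst-case lower bound against the assumed upper bound at the same index.
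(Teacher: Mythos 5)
Your proof is correct and follows exactly the route the paper intends: the corollary is presented as a direct consequence of \cref{thm: main}, obtained by pitting the lower bound against the assumed anytime upper bound at a long-step index $T$ and taking square roots, and you correctly invoke \cref{thm: infty} (together with positivity of stepsizes) to ensure there are infinitely many such $T$ at which both hypotheses of \cref{thm: main} hold, so the conclusion is non-vacuous. The paper gives no separate proof for this corollary, and your argument is the standard one it leaves implicit.
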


Overall, we see that acceleration requires the stepsize sequence to have a subsequence going to infinity by \cref{thm: infty}, yet ``not too fast'', as captured by Corollary~\ref{cor: not_too_large}.
Furthermore, \cref{thm: main} shows that if a stepsize schedule that incorporates long steps satisfies
$\left(\frac{\eta_T}{\sum_{t=0}^{T-1}\eta_t}\right)^2\gtrsim 1$
for infinitely many $T\in\NN$, then the lower bound
$f(x_{T+1})-f^*\gtrsim L\norm{x_0-x^*}^2
$
applies for arbitrarily large $T\in\NN$.\footnote{This is the strongest possible lower bound, since $f(x_0)-f^*\leq \frac{L}{2}\norm{x_0-x^*}^2$ in the first place due to smoothness.}
In particular, it is easy to verify that the silver stepsize satisfies this property \citep[Eq. 1.3 and Lemma 2.3]{altschuler2023silverII}, hence we get:

\begin{corollary} \label{cor:silver}
    No anytime bound of the silver stepsize schedule goes to zero (at any rate whatsoever).
\end{corollary}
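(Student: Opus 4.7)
The plan is to deduce \cref{cor:silver} by combining \cref{thm: main} with the explicit recursive structure of the silver stepsize schedule from \cite{altschuler2023silverII}. I would carry this out in two steps.

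First, I would use Eq.~(1.3) and Lemma~2.3 of \cite{altschuler2023silverII} to produce a strictly increasing sequence of indices $T_n\to\infty$ corresponding to the ``peak'' long step at each level of the silver recursion, together with an absolute constant $c_0>0$ such that for every $n$ large enough,
\[
\eta_{T_n}\geq \frac{2}{L}~,\qquad \sum_{t=0}^{T_n-1}\eta_t\geq \frac{1}{L}~,\qquad \frac{\eta_{T_n}}{\sum_{t=0}^{T_n-1}\eta_t}\geq c_0~.
\]
The underlying reason this should hold is that the silver schedule is fractally constructed so that at level $n$ the peak stepsize grows like $(1+\sqrt{2})^{n}$, while the cumulative mass of all previous stepsizes in the block grows at most at the same geometric order (dominated by the peak of the previous level), so their ratio stays bounded below uniformly in $n$. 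I expect this to be the main obstacle: converting the informal fractal picture into a quantitative uniform lower bound on the ratio requires a careful bookkeeping argument through the recursion, rather than just showing the ratio is nonvanishing infinitely often.

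Second, once such a sequence $(T_n)$ is exhibited, the corollary follows almost immediately. Suppose toward contradiction that the silver schedule admits some anytime bound, i.e.\ a function $U:\NN\to\R_+$ with $\lim_{T\to\infty}U(T)=0$ such that
\[
f(x_T)-f^*\leq U(T)\cdot L\norm{x_0-x^*}^2
\]
for every $L$-smooth convex $f:\reals\to\reals$, every $x_0\in\reals$, and every $T\in\NN$. For each large $T_n$, the hypotheses of \cref{thm: main} are met, so there exists a one-dimensional $L$-smooth convex $f_n$ and initialization $x_0$ for which
\[
f_n(x_{T_n+1})-f_n^*\geq \frac{1}{32}L\norm{x_0-x^*}^2\left(\frac{\eta_{T_n}}{\sum_{t=0}^{T_n-1}\eta_t}\right)^2\geq \frac{c_0^2}{32}\,L\norm{x_0-x^*}^2~.
\]
Applying the assumed anytime bound to each $f_n$ at time $T_n+1$ would then force $U(T_n+1)\geq c_0^2/32$ along the subsequence $T_n\to\infty$, contradicting $U(T)\to 0$. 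This shows that no $U(T)\to 0$ can serve as an anytime bound for the silver schedule, which is exactly the corollary.
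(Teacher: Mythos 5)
Your argument is correct and follows essentially the same route as the paper: invoke \cref{thm: main} at the infinitely many indices where the silver schedule's ratio $\eta_T/\sum_{t<T}\eta_t$ is bounded below (as certified by Eq.~1.3 and Lemma~2.3 of \cite{altschuler2023silverII}), and conclude that no vanishing anytime bound can hold. The paper, like you, leaves the quantitative verification of the uniform ratio lower bound to the cited reference, so there is no substantive difference in approach.
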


\section{Proofs}

\subsection{Proof of \cref{thm: infty}}

We first note that by rescaling, it suffices to prove the claim for $L=1$. We will also assume $\sum_{t=0}^{\infty}\eta_t=\infty$ (otherwise it is well-known that GD may not converge). Next, consider the convex quadratic $f_T(x)=\frac{x^2}{2\sum_{t=0}^{T-1}\eta_t}$ which is minimized at $f_T(0)=0$,
and note that for sufficiently large $T$ we may assume without loss of generality that $f_T$ is 1-smooth, and $\forall t<T:\eta_{t}\leq \tfrac{1}{2}\sum_{j=0}^{T-1}\eta_j$.\footnote{Otherwise, since $\sum_{t=0}^{\infty}\eta_t=\infty$, there is a subsequence of stepsizes diverging to $\infty$, proving the theorem altogether.} For $x_0=1$, a simple induction reveals that
$x_T
=\prod_{t=0}^{T-1}\left(1-\frac{\eta_t}{\sum_{t=0}^{T-1}\eta_t}\right)$.
So if for all $T\in\NN:f(x_T)-f^*=f(x_T)\leq\phi(T)$ for some $\phi(T)=o(1/T)$,
then
\begin{align*}
\phi(T)
&\geq f_T(x_T)
=\frac{1}{2\sum_{t=0}^{T-1}\eta_t}\cdot\prod_{t=0}^{T-1}\left(1-\frac{\eta_t}{\sum_{t=0}^{T-1}\eta_t}\right)^2
\\&=\frac{1}{2\sum_{t=0}^{T-1}\eta_t}\cdot\exp\left[2\cdot\sum_{t=0}^{T-1}\log\left(1-\frac{\eta_t}{\sum_{t=0}^{T-1}\eta_t}\right)\right]
\\&\geq \frac{1}{2\sum_{t=0}^{T-1}\eta_t}\cdot\exp\left[-4\cdot\sum_{t=0}^{T-1}\frac{\eta_t}{\sum_{t=0}^{T-1}\eta_t}\right]
=\frac{e^{-4}}{2\sum_{t=0}^{T-1}\eta_t}~,
\end{align*}
thus
$\max_{0\leq t\leq T-1}\eta_t
\geq\frac{1}{T}\sum_{t=0}^{T-1}\eta_t
\geq \frac{e^{-4}}{2T\phi(T)}
\overset{T\to\infty}{\longrightarrow}\infty$.

\subsection{Proof of \cref{thm: main}}

We first note that by rescaling, it suffices to prove the claim for $L=1$.
Accordingly, we let $T\in\NN$ be so that
$\min\{\frac{\eta_T}{2},\sum_{t=0}^{T-1}\eta_{t}\}\geq1$, and
denote $\Sigma_T:=\sum_{t=0}^{T-1}\eta_t,~
c_T:=\frac{\eta_T^2}{32\Sigma_T^2}$.

Let $a,r>0$ to be determined later, and consider the scaled Huber loss and initialization point:
\begin{align*}
f(x)
=
\begin{cases}
    \frac{a}{2} x^2~, & x\leq r
    \\
    ar\cdot x-\frac{ar^2}{2}~, & x> r
\end{cases}
~,~~~~~
x_0=r+ar \Sigma_T~.
\end{align*}
Note that $f$ is convex, $a$-smooth, and that $f^*=f(0)=0$.
We will show that for a suitable choice of $a\leq 1,~r>0:$
\begin{equation} \label{eq: strict lower bound silver}
    f(x_{T+1})-f^*\geq
    c_T\norm{x_0-x^*}^2=
    \frac{1}{32}\norm{x_0-x^*}^2\left(\frac{\eta_T}{\sum_{t=0}^{T-1}\eta_t}\right)^2~.
\end{equation}

\begin{lemma} \label{lem:ar}
    There exist $a,r>0$ such that $\max\left\{\frac{2}{\eta_{T}},~\left(\frac{8c_T}{\eta_T^2 r^2}\right)^{1/3}\right\}\leq a\leq \min\left\{1,~\frac{1-r}{r\Sigma_T}\right\}$.
\end{lemma}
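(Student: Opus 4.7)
The plan is to prove this lemma by explicit construction rather than any indirect argument. I would exhibit specific values of $a$ and $r$ in terms of $\Sigma_T$ and $\eta_T$, and then verify the four inequalities directly using the standing hypotheses $\eta_T \geq 2$ (from $\eta_T/2 \geq 1$) and $\Sigma_T \geq 1$.

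First I would simplify the awkward expression $\left(\frac{8c_T}{\eta_T^2 r^2}\right)^{1/3}$. Plugging in $c_T = \frac{\eta_T^2}{32\Sigma_T^2}$, its cube becomes $\frac{1}{4\Sigma_T^2 r^2}$, so the second lower bound on $a$ is really $a \geq (4\Sigma_T^2 r^2)^{-1/3}$. This suggests taking $r = \frac{1}{2\Sigma_T}$, which makes this lower bound equal to exactly $1$, and therefore just compatible with the upper bound $a \leq 1$. Accordingly, I would set $a = 1$ and $r = \frac{1}{2\Sigma_T}$, noting that both are strictly positive since $\Sigma_T \geq 1$.

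Next I would verify each of the four inequalities with this choice. The bound $2/\eta_T \leq a = 1$ follows from $\eta_T \geq 2$. The bound $(8c_T/(\eta_T^2 r^2))^{1/3} \leq a$ holds with equality by the simplification above. The bound $a \leq 1$ is trivial. The final inequality $a \leq (1-r)/(r\Sigma_T)$ reduces, after plugging in $r = \frac{1}{2\Sigma_T}$, to $1 \leq 2 - \frac{1}{\Sigma_T}$, which is exactly the hypothesis $\Sigma_T \geq 1$.

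There is no substantive obstacle here: the lemma is an algebraic verification, and the two hypotheses supply precisely the slack needed to realize all four constraints simultaneously (with several of them tight). The only real content is the observation that $r = \frac{1}{2\Sigma_T}$ is essentially forced, as it is the unique value that matches the second lower bound to the upper bound $a = 1$, thereby calibrating the scale of the Huber construction to the smoothness budget $L = 1$.
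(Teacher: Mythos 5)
Your proof is correct, and it is cleaner than the paper's. The paper argues indirectly: it first shows that a nonempty interval of admissible $r$ exists (the chain $\frac{\sqrt{8c_T}}{\eta_T}\leq r\leq\min\{\sqrt{\eta_T c_T},\,\frac{1}{2\Sigma_T}\}$, using $\eta_T\geq2$), then manipulates this into a cruder bound involving $(1-r)^3$ via the estimate $r\leq\tfrac12$, and finally deduces that a nonempty interval of admissible $a$ sits between the stated max and min. You instead notice that after substituting $c_T=\eta_T^2/(32\Sigma_T^2)$, the second lower bound on $a$ collapses to $(4\Sigma_T^2 r^2)^{-1/3}$, so the single choice $r=\frac{1}{2\Sigma_T}$, $a=1$ saturates the binding constraints, and the remaining two inequalities reduce exactly to the standing hypotheses $\eta_T\geq2$ and $\Sigma_T\geq1$. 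The explicit construction buys transparency and makes it immediately clear which hypotheses are load-bearing; the paper's interval argument buys nothing extra here, since the lemma is a pure existence claim and you have exhibited a witness. One small caveat worth being aware of downstream: the paper later writes ``$\frac{2}{\eta_T}<a$'' after invoking this lemma, but the lemma (and your choice, when $\eta_T=2$) only gives $\frac{2}{\eta_T}\leq a$; fortunately the weak inequality is all that the subsequent step $(\star)$ actually uses, since it only needs $a\eta_T-1\geq a\eta_T/2$.
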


\begin{proof}
By assumption on $T$ that $\eta_T\geq 2$,
and the definition of $c_T=\frac{\eta_T^2}{32\Sigma_T^2}\iff \sqrt{8c_T}=\frac{\eta_T}{2\Sigma_T}$ we get $\frac{\sqrt{8c_T}}{\eta_T}
\leq
\min\left\{\sqrt{\eta_T c_T},~\frac{1}{2\Sigma_T}
\right\}$.
Thus, there exists some $r>0$ such that
\begin{equation} \label{eq: r 1st}
\frac{\sqrt{8c_T}}{\eta_T}
\leq r
\leq\min\left\{\sqrt{\eta_T c_T},~\frac{1}{2\Sigma_T}\right\}~.
\end{equation}
Fixing such $r$, and recalling that $\Sigma_T\geq 1$ by assumption on $T$, we get that $r\leq \frac{1}{2\Sigma_T}\leq\half$ which implies $\tfrac{1}{8}\leq(1-r)^3$,
thus $\frac{1}{2\Sigma_T}\leq \frac{4(1-r)^3}{\Sigma_T}=\frac{\eta_{T}^2(1-r)^3}{8c_T\Sigma_T^3}$. Combining this with \cref{eq: r 1st}, we get the cruder upper bound $\frac{\sqrt{8c_T}}{\eta_T}
\leq r\leq
\min\left\{\sqrt{\eta_{T}c_T},~\frac{\eta_{T}^2(1-r)^3}{8c_T\Sigma_T^3}\right\}$.
Rearranging the latter inequalities, we get that
$\frac{2}{\eta_{T}}
\leq\left(\frac{8c_T}{\eta_{T}^2r^2}\right)^{1/3}
\leq\min\left\{1,~\frac{1-r}{r\Sigma_T}\right\}$, so in particular
$\max\left\{\frac{2}{\eta_{T}},~\left(\frac{8c_T}{\eta_{T}^2r^2}\right)^{1/3}\right\}
\leq \min\left\{1,~\frac{1-r}{r\Sigma_T}\right\}$.
Thus, setting $a$ between these left hand side and right hand side completes the proof.
\end{proof}

Following Lemma~\ref{lem:ar}, we consider $a,r$ that satisfy the conditions stated therein.
That being the case, since $a\leq 1$ we see that $f$ is indeed $1$-smooth. Furthermore, we have for all $t\leq T:x_{t+1}=x_t-\eta_t f'(x_t)=x_{t}-ar\eta_t$,
thus $x_{T}=x_{0}-ar \sum_{j=0}^{T-1}\eta_j=r$.
This implies, by the gradient descent update and the definition of $f$, that
$x_{T+1}=x_{T}-\eta_{T}f'(x_{T})
=r-ar\eta_{T}=r(1-a\eta_T)$.
Further noting that by Lemma~\ref{lem:ar} it holds that $\frac{2}{\eta_T}<a$ which implies $a\eta_T-1\geq1$, we overall get that
\begin{align*}
f(x_{T+1})-f^*=f(x_{T+1})
&=\frac{a}{2}(x_{T+1})^2
=\frac{a(a\eta_T-1)^2 r^2}{2}
\\
&\overset{(\star)}{\geq}\frac{a^3 \eta_T^2 r^2}{8}
\overset{(\star\star)}{\geq}c_T
\overset{(\star\star\star)}{\geq}c_T\abs{r+ar \Sigma_T}^2
=c_T\abs{x_0-x^*}^2~,
\end{align*}
where $(\star),(\star\star),(\star\star\star)$ all follow from Lemma~\ref{lem:ar}, thus establishing \cref{eq: strict lower bound silver}, completing the proof.

\acks{This research is supported in part by European Research Council (ERC) grant 754705. GK is supported by an Azrieli Foundation graduate fellowship.}

\bibliography{bib}

\end{document}